\newtheorem{theorem}{\bf Theorem}
\newtheorem{lemma}{\bf Lemma}
\newtheorem{corollary}{\bf Corollary}
\begin{document}

\title{On strong forms of the Borel--Cantelli lemma and intermittent interval maps
}
\author{Andrei N. Frolov \footnote{This research is supported by RFBR, project 18--01--00393}
\\ Dept. of Mathematics and Mechanics
\\ St.~Petersburg State University
\\ St. Petersburg, Russia
\\ E-mail address: Andrei.Frolov@pobox.spbu.ru}

\maketitle


{\abstract{ 
We derive new variants of the quantitative Borel--Cantelli lemma and
apply them to analysis of statistical properties for some
dynamical systems. We consider intermittent maps of $(0,1]$
which have absolutely continuous invariant probability measures.
In particular, we prove that every sequence of intervals 
with left endpoints uniformly separated from zero is 
the strong Borel--Cantelli sequence
with respect to such map and invariant measure.
} 
}

\medskip
{\bf AMS 2010 subject classification:} 60F15, 37D25, 37E05

\medskip
{\bf Key words:} Borel--Cantelli lemma, intermittent interval maps, 
non-uniformly hyperbolic dynamical systems

\section*{1. Introduction and results}

Let $ (\Omega,\mathcal{F},P) $ be a probability space and $ \{A_n\} $ be a sequence
of events. Put
\begin{eqnarray}\label{snen}
S_n = \sum\limits_{k=1}^n \mathbb{I}(A_k),\quad
E_n = \sum\limits_{k=1}^n P(A_k),
\end{eqnarray}
where $\mathbb{I}(\cdot)$ is the indicator of the event in brackets.

The Borel--Cantelli lemma deals with the probability 
$P_\infty=P(\lim\limits_{n\to\infty} S_n=\infty )$. By its first part,
$P_\infty=0$ when $\lim\limits_{n\to\infty} E_n <\infty$. 
If $\lim\limits_{n\to\infty} E_n =\infty$,
then the situation is complicated. The probability $P_\infty$ can be
any number in $[0,1]$. It is known various conditions sufficient for $P_\infty=1$. 
In statements of the second part of the Borel--Cantelli lemma,
one can usually see the condition of a pair-wise independence for 
events under consideration. Unfortunately, this condition fails in many interesting cases.
In further generalizations, one can find
various conditions sufficient for $P_\infty\geqslant L$,
where $L$ is some numerical characteristic generated by the sequence of events.
(Cf. [1--7], for example,  and the references therein.)
One then need some conditions on $\{A_n\}$ which yield $L=1$.

In various applications, strong laws of large numbers for $S_n$ 
are of essential interest. We consider $S_n$ centered at mean $E_n$
and normalized by $f(E_n)$. 
Such strong law of large numbers for $S_n$ is called the quantitative Borel--Cantelli lemma.
Note that $S_n$ is the number of those events from $A_1,\dots,A_n$ which occur. Hence,
one deals with the maximal generalization of the Bernoulli trails when trails can be
dependent and probability of head can change. Khintchine's law of the iterated logarithm
shows a kind of the function $f(x)$ for the Bernoulli case. 
Similar functions will be used below.

In this paper, we derive new variants of the quantitative Borel--Cantelli lemma
and apply them to describe statistical properties of 
some non-uniformly hyperbolic (expanding) dynamical systems.

Our first result is as follows.

\begin{theorem}\label{QBCL}
Let $ \psi(x) $, $x \geqslant 0$, be a non-decreasing positive function with 
$ \sum\limits_{n=1}^\infty 1/(n \psi(n))<\infty $ and 
$g(x)$, $x \geqslant 0$, be a positive function such that 
$ g(x)/x $  and $ x^{2-\delta}/g(x) $ are non-decreasing for some $\delta\in (0,1)$.
Assume that $ E_n \to\infty $ as $ n\to\infty $ and 
\begin{eqnarray}\label{var}
 Var (S_m-S_n) \leqslant g\left(E_m-E_n\right) 
\end{eqnarray}
for all $m>n$ and all sufficiently large $n$.

Then
\begin{eqnarray}\label{qbcl}
S_n = E_n +o\left(\sqrt{g(E_n) \psi(\log E_n)} (\log E_n)^{3/2}\right) \quad\mbox{a.s.}
\end{eqnarray}
\end{theorem}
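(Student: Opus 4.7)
The plan is to apply the classical two-step machinery for strong laws: establish the asymptotic along a geometric subsequence via Chebyshev and the first Borel--Cantelli lemma, then fill the gaps with a Menshov--Rademacher-type maximal inequality. The exponent $3/2$ on $\log E_n$ is characteristic of a G\'al--Koksma-style argument and suggests the method. Set $T_n:=S_n-E_n$, so that (\ref{var}) reads $E(T_m-T_n)^2\leqslant g(E_m-E_n)$, and abbreviate $f(x):=\sqrt{g(x)\psi(\log x)}\,(\log x)^{3/2}$.

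Fix $\rho>1$ and choose an increasing subsequence $(n_k)$ with $E_{n_k}\in[\rho^k,\rho^{k+1})$, which is possible since $E_n$ is non-decreasing and tends to infinity. Using the variance bound from a fixed large initial index together with Chebyshev's inequality, one obtains
$$
P\bigl(|T_{n_k}|>\varepsilon f(E_{n_k})\bigr)\leqslant\frac{C}{\varepsilon^2\,\psi(k\log\rho)\,k^3}.
$$
This sequence is summable since $\psi$ is non-decreasing and $\sum 1/(n\psi(n))<\infty$, so by the first Borel--Cantelli lemma $T_{n_k}=o(f(E_{n_k}))$ a.s.

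To fill the gaps I would dyadically decompose $(n_k,n_{k+1}]$ and apply (\ref{var}) on every dyadic subinterval; because $g(x)/x$ is non-decreasing, $g$ is super-additive, so the dyadic variances recombine and yield a Menshov--Rademacher-type estimate
$$
E\Bigl(\max_{n_k<n\leqslant n_{k+1}}|T_n-T_{n_k}|\Bigr)^2\leqslant C\bigl(\log(n_{k+1}-n_k)\bigr)^2\,g(E_{n_{k+1}}-E_{n_k}).
$$
Since $E_{n_{k+1}}-E_{n_k}\leqslant C\,E_{n_k}$ and $x^{2-\delta}/g(x)$ is non-decreasing, one checks that $g(E_{n_{k+1}}-E_{n_k})\leqslant C'\,g(E_{n_k})$. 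A second application of Chebyshev plus Borel--Cantelli, together with the observation that $f(E_n)$ changes by only a bounded factor on each block $[n_k,n_{k+1}]$ (by monotonicity of $g$, $g(x)/x$ and $\psi$), then delivers (\ref{qbcl}) for every $n$.

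The main obstacle is calibrating the logarithmic factor of the maximal inequality against the $(\log E_n)^{3/2}$ budget. We have only a second-moment bound -- no orthogonality, no independence -- so the dyadic argument has to use super-additivity of $g$ at every level without accumulating more than one additional $\log$. A secondary subtlety is that $n$ need not be comparable to $E_n$ when some of the probabilities $P(A_k)$ are very small; the second monotonicity hypothesis, which forces $g(x)=O(x^{2-\delta})$, is precisely what guarantees that $\log(n_{k+1}-n_k)$ can be absorbed into the $(\log E_n)^{3/2}$ factor of the conclusion.
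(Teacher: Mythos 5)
Your step along the geometric subsequence is fine, but the gap-filling step has a genuine gap, and you have in fact put your finger on it yourself without resolving it. The Rademacher--Menshov/M\'oricz-type maximal inequality you invoke is obtained by dyadic decomposition in the \emph{index} $n$, so its logarithmic factor is $\log(n_{k+1}-n_k)$, the number of dyadic levels of the block $(n_k,n_{k+1}]$. Nothing in the hypotheses ties $n$ to $E_n$: the probabilities $P(A_k)$ may be tiny, e.g.\ $P(A_k)=1/(k\log k)$ gives $E_n\sim\log\log n$, so that with $E_{n_k}\asymp\rho^k$ one has $\log(n_{k+1}-n_k)\asymp\rho^{k+1}$, which is exponentially larger than the available budget $\psi(ck)\,k^{3}$ coming from $(\log E_n)^{3/2}\psi(\log E_n)^{1/2}$. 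Your proposed rescue --- that the monotonicity of $x^{2-\delta}/g(x)$ ``absorbs'' this factor --- cannot work: that hypothesis constrains only the growth of $g$ (it is used to compare $g(2E_{N_k})$ with $g(E_{N_k})$, and nothing more) and gives no relation whatever between the index scale and the $E$-scale. So as written the second step fails.

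The paper's proof avoids the problem by doing the dyadic chaining in the $E$-scale rather than the $n$-scale. One sets $N_u=\max\{n:E_n<u\}$, decomposes $(0,2^r]$ into all dyadic blocks $(t2^s,(t+1)2^s]$, and bounds the expectation of $T_r=\sum_{s,t}\bigl(\tilde S(N_{t2^s},N_{(t+1)2^s})\bigr)^2$ by $(r+1)g(2^r)$, using (\ref{var}) and the superadditivity of $g$ coming from the monotonicity of $g(x)/x$ (this is the same superadditivity you use, but applied to increments of $E$, not counts of indices). Summing $ET_r/(r^2g(2^r)\psi(cr))$ gives $T_r=o\left(r^2g(2^r)\psi(cr)\right)$ a.s., and then Cauchy--Schwarz chaining expresses $S_{N_k}-E_{N_k}$, for $2^{r-1}<k\leqslant 2^r$, through at most $r+1$ dyadic blocks, so only $r\asymp\log_2E_{N_k}$ logarithmic levels ever enter --- this is where the exponent $3/2$ comes from. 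Finally, for $n$ between consecutive $N_k$'s no maximal inequality is needed at all: $S_n$ and $E_n$ are non-decreasing and $E_{N_{k+1}}\leqslant E_{N_k}+2$, so $S_n-E_n$ is sandwiched between $S_{N_k}-E_{N_{k+1}}$ and $S_{N_{k+1}}-E_{N_k}$ up to an additive constant. If you replace your index-scale dyadic decomposition by this $E$-scale decomposition (and use monotonicity for the within-block interpolation), your outline becomes essentially the paper's argument; in its current form the key estimate is not attainable.
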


Relation (\ref{qbcl}) with $g(x)=x$ and $\psi(x)=x^\varepsilon$, $\varepsilon>0$,
has been obtained in Philipp [9] under
\begin{eqnarray}\label{Ph}
 P(A_i A_j) \leqslant P(A_i) P(A_j) + b_{j-i} P(A_i)
\end{eqnarray}
for all $i>j$, 
where $ \{b_n\} $ is a sequences of real numbers such that $\sum\limits_{n=1}^\infty b_n<\infty$.
When condition (\ref{var}) holds for $g(x)= Cx$, relation (\ref{qbcl}) has been derived in Petrov [8].
Note that inequalities (\ref{Ph}) imply condition (\ref{var}) with $g(x)= Cx$. 
Theorem \ref{QBCL} generalizes the mentioned results. To prove Theorem \ref{QBCL}, 
we use a modification of the methods from [8--10].

Taking $\psi(x)= (\log x)^{1+\varepsilon}$
and $g(x)=C x^{1+\gamma}$, we arrive at the next result.

\begin{corollary}
Assume that $ E_n \to\infty $ as $ n\to\infty $ and 
$ Var (S_m-S_n) \leqslant C \left(E_m-E_n\right)^{1+\gamma} $
for all $m>n$ and all sufficiently large $n$, where $ \gamma\in [0,1) $ and $C>0$.

Then
\begin{eqnarray}\label{qbcla}
S_n = E_n +o\left(E_n^{(1+\gamma)/2} (\log E_n)^{3/2}
 (\log\log E_n)^{(1+\varepsilon)/2}
\right) \quad\mbox{a.s.}
\end{eqnarray}
for all $ \varepsilon>0 $.
\end{corollary}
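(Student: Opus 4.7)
The plan is to obtain the Corollary as a direct specialization of Theorem \ref{QBCL}, taking $g(x)=Cx^{1+\gamma}$ and $\psi(x)=(\log x)^{1+\varepsilon}$, and then simplifying the error term in (\ref{qbcl}). So the work is purely a verification of hypotheses plus algebra; no probabilistic argument is needed beyond what is already in Theorem \ref{QBCL}.

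First I would check that $\psi(x)=(\log x)^{1+\varepsilon}$ (extended by a positive constant near the origin) is non-decreasing and positive, and that $\sum_{n\geqslant 2} 1/(n(\log n)^{1+\varepsilon})<\infty$, which is the standard Bertrand series convergence. Next I would check the two monotonicity conditions imposed on $g$. With $g(x)=Cx^{1+\gamma}$ one has $g(x)/x=Cx^{\gamma}$, which is non-decreasing because $\gamma\geqslant 0$, and $x^{2-\delta}/g(x)=C^{-1}x^{1-\gamma-\delta}$, which is non-decreasing as soon as $\delta\leqslant 1-\gamma$. The assumption $\gamma\in[0,1)$ is precisely what makes the interval $(0,1-\gamma]$ nonempty, so a valid $\delta\in(0,1)$ exists. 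This is the only spot where the hypothesis $\gamma<1$ is actually used, and it is the step I would flag as the main (mild) obstacle to check carefully.

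The hypothesis (\ref{var}) of Theorem \ref{QBCL} is exactly the assumed bound on $\text{Var}(S_m-S_n)$ with this choice of $g$, and $E_n\to\infty$ is assumed directly. Hence Theorem \ref{QBCL} applies and yields
\begin{equation*}
S_n=E_n+o\!\left(\sqrt{C E_n^{1+\gamma}\,(\log\log E_n)^{1+\varepsilon}}\,(\log E_n)^{3/2}\right)\quad\text{a.s.}
\end{equation*}
Pulling the constant $\sqrt{C}$ into the $o(\cdot)$ and collecting exponents gives the claimed bound
\begin{equation*}
S_n=E_n+o\!\left(E_n^{(1+\gamma)/2}(\log E_n)^{3/2}(\log\log E_n)^{(1+\varepsilon)/2}\right)\quad\text{a.s.},
\end{equation*}
which is (\ref{qbcla}). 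Since $\varepsilon>0$ was arbitrary, the conclusion holds for every such $\varepsilon$, completing the proof.
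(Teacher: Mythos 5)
Your proof is correct and follows exactly the paper's route: the paper obtains the Corollary simply by taking $\psi(x)=(\log x)^{1+\varepsilon}$ and $g(x)=Cx^{1+\gamma}$ in Theorem \ref{QBCL}, which is what you do, and your verification of the monotonicity conditions (including the role of $\gamma<1$ in choosing $\delta$) fills in the details the paper leaves implicit.
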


A verification of inequalities (\ref{var}) is the main problem for
applications of Theorem \ref{QBCL} and one need
simple conditions sufficient for (\ref{var}). Moreover, examples are
of interest for $g(x)$ increasing faster than $x$. 
The following two results yield such conditions and examples. 

\begin{theorem}\label{QBCL2}
Let $c(x)$, $x\geqslant 0$, be a positive non-increasing
continuous function such that $c(x)\to 0$ as $x\to\infty$
and $f(x)=x/c(x)$ is increasing.
Put $c_n=c(n)$ for all natural $n$.
Let $ \{b_n\} $ a sequences of real numbers such that $\sum\limits_{n=1}^\infty b_n<\infty$.
Assume that
\begin{eqnarray}\label{pr}
 P(A_i A_j) \leqslant (1+c_{j-i}) P(A_i) P(A_j) + b_{j-i} P(A_i)
\end{eqnarray}
for all $i>j$. 

Then relation (\ref{qbcl}) holds with
$g(x)=x f^{-1}(x)$, where $f^{-1}(x)$ is the inverse function to $ f(x)$. 
If inequalities (\ref{pr}) hold for all $i>j$ with $0$ instead of $c_{i-j}$, then
 relation (\ref{qbcl}) holds for $g(x)=x$.
\end{theorem}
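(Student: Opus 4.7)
The plan is to reduce Theorem~\ref{QBCL2} to Theorem~\ref{QBCL} by verifying the variance condition (\ref{var}) with the stated $g$. Fix $m>n$ and write $T=S_m-S_n$, $E=E_m-E_n$, and $p_k=P(A_k)$. Expanding the variance gives
\begin{eqnarray*}
 Var(T) \leqslant \sum_{k=n+1}^m p_k + 2\sum_{n<j<i\leqslant m}\bigl(P(A_iA_j)-p_ip_j\bigr).
\end{eqnarray*}
Applying (\ref{pr}) term by term yields
\begin{eqnarray*}
 Var(T)\leqslant E + 2\sum_{n<j<i\leqslant m} c_{i-j}\, p_i p_j + 2\sum_{n<j<i\leqslant m} b_{i-j}\, p_i.
\end{eqnarray*}
The last sum is at most $2E\sum_{d\geqslant 1} b_d = O(E)$, so the issue is to bound the $c$-sum $\Sigma=\sum_{d\geqslant 1} c_d \sum_{j} p_j p_{j+d}$ by $O(Ef^{-1}(E))$.

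The key step is to split $\Sigma$ at the level $L=f^{-1}(E)$, i.e. at the value with $L/c(L)=E$. For the short-range part $d\leqslant L$, since $p_k\leqslant 1$ one has $\sum_{d\leqslant L}\sum_j p_j p_{j+d}\leqslant L\sum_j p_j\leqslant LE$; together with $c_d\leqslant c_1$ this gives $O(EL)=O(Ef^{-1}(E))$. For the long-range part $d>L$, monotonicity of $c$ gives $c_d\leqslant c(L)=L/E$, and the trivial bound $\sum_{d>L}\sum_j p_j p_{j+d}\leqslant E^2$ yields $O((L/E)E^2)=O(EL)$. Adding the two pieces gives $Var(T)\leqslant C(E+Ef^{-1}(E))=O(g(E))$, which is (\ref{var}) for all sufficiently large $E$. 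The case where $c_{i-j}$ is replaced by $0$ is immediate: only the $b$-term survives and one obtains $Var(T)=O(E)$, so Theorem~\ref{QBCL} applies with $g(x)=x$.

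The main obstacle I expect is not the variance estimate itself but checking that $g(x)=xf^{-1}(x)$ meets the structural hypotheses of Theorem~\ref{QBCL}, namely that $g(x)/x=f^{-1}(x)$ is non-decreasing (which is immediate from the monotonicity of $f$) and that $x^{2-\delta}/g(x)=x^{1-\delta}/f^{-1}(x)$ is non-decreasing for some $\delta\in(0,1)$. The latter is a mild regularity requirement on the decay rate of $c$: it holds whenever $f^{-1}$ grows no faster than $x^{1-\delta}$, equivalently whenever $c(x)$ stays above a polynomial in $1/x$. Once this is in place, plugging the verified variance bound into Theorem~\ref{QBCL} delivers (\ref{qbcl}) for $g(x)=xf^{-1}(x)$, completing the proof.
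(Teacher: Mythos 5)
Your verification of (\ref{var}) is exactly the paper's argument: the paper expands the variance in the same way, bounds the $b$-sum by $E(n,m)\sum_j b_j$, and splits the $c$-sum at the point $k_{n,m}$ defined by $k_{n,m}/c(k_{n,m})=E(n,m)$, which is precisely your $L=f^{-1}(E)$, bounding the short-range part by $k_{n,m}\sup_n c_n\cdot E(n,m)$ and the long-range part by $c(k_{n,m})E(n,m)^2=k_{n,m}E(n,m)$. So the core of your proof coincides with the paper's reduction to Theorem \ref{QBCL}.

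The caveat you raise at the end is a genuine point which the paper passes over in silence, but your resolution of it is stated backwards and left open. Writing $x=f(u)$, the requirement that $x^{2-\delta}/g(x)=x^{1-\delta}/f^{-1}(x)$ be non-decreasing is equivalent to $u^{\delta}c(u)^{1-\delta}$ being non-increasing, i.e.\ to $x^{\gamma}c(x)$ being non-increasing for some $\gamma>0$: it asks that $c$ decay \emph{at least} polynomially fast, not that $c(x)$ stay above a power of $1/x$; slowly decaying $c$, such as $c(x)=1/\log(x+e)$, is exactly what violates it. The gap can be closed without extra hypotheses on $c$: since $c$ is non-increasing, for $\lambda\geqslant 1$ one has $f^{-1}(\lambda x)\leqslant\lambda f^{-1}(x)$ (if $u=f^{-1}(x)$, $v=f^{-1}(\lambda x)$, then $v=\lambda u\,c(v)/c(u)\leqslant\lambda u$), hence $g(\lambda x)\leqslant\lambda^{2}g(x)$. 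Inspecting the proof of Theorem \ref{QBCL}, the monotonicity of $x^{2-\delta}/g(x)$ is used only to compare $g(2E_{N_k})$ with $g(E_{N_k})$ and $g(E_n+2)$ with $g(E_n)$, and this doubling-type bound (with exponent $2$ instead of $2-\delta$) serves the same purpose, the larger constant being absorbed into the $o(\cdot)$ term. With that observation your argument is complete; in the intended application, $c(x)=Cx^{1-1/\alpha}$ gives $g(x)\asymp x^{1+\alpha}$ and the hypothesis of Theorem \ref{QBCL} holds literally anyway.
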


If $ \sum\limits_{n=1}^\infty c_n <\infty $ in Theorem \ref{QBCL2},
then, replacing $b_n$ by $b_n+c_n$, we arrive at 
inequalities (\ref{pr}) with $c_{i-j}=0$ which coincide with inequalities (\ref{Ph}).

The Borel--Cantelli lemma play an important role in an analysis of
statistical properties of dynamical systems. (Cf., for example, [11-15] and the references therein.)  
We consider some non-uniformly hyperbolic (expanding) systems. 

Models of evolutions of dynamical systems are given by probability spaces and
transformations of these spaces.  
Let $T$ be an ergodic measure-preserving transformation 
of a probability space $ (X,\mathcal{B},\mu) $ and
$ \{B_n\} $ be a sequence of set such that
$B_n \in \mathcal{B}$ and $ \sum\limits_{n=1}^\infty \mu B_n =\infty $. (If the last series converges, then the first part
of the Borel--Cantelli lemma easily yields the solution of the 
below problem.) The second part of the Borel--Cantelli lemma
can give answer on the question whether
$T^n(x)$ belongs to $B_n$ infinitely often for almost every $x\in X$.
The quantitative  Borel--Cantelli lemma can also yield bounds
for numbers of visits of orbits of $x$ in $B_k$ up to time $n$.

The main problem is that events usually are not pair-wise independent for dynamical
systems. Hence, variants of the Borel--Cantelli can be useful
and our results can be applied as well.

Consider the following family of interval maps with neutral fixed points.
For $ \alpha\in (0,1) $, define $T_\alpha: (0,1] \to (0,1]$  by
\begin{equation*}
T_{\alpha}(x) =
  \begin{cases}
    x(1+2^{\alpha} x^{\alpha}), & \mbox{if}\; x \in (0,1/2], \\
    2x-1, &  \mbox{if}\; x \in (1/2,1].
  \end{cases}
\end{equation*}
It is known that $ T_{\alpha} $ preserves 
a unique probability measure $ \mu $ which is absolutely continuous with respect to Lebesgue's measure $ \lambda $.

Take $\Omega=(0,1]$, $\mathcal{F}$ being $\sigma$-field of Lebesgue's subset of $(0,1]$ and $P=\mu$. 
Applying of Theorem \ref{QBCL2} yields the following result.  

\begin{theorem}\label{IM}
Let $\{B_n\}$ be a sequence of intervals such that $ \sum\limits_{n=1}^\infty \mu B_n =\infty $
and $B_n \subset (d,1]$ for all $n$ and some $d>0$. 
Put $A_n=\{T_{\alpha}^{n} x \in B_n\}$
and define $S_n$ and $E_n$ by (\ref{snen}).

Then 
\begin{eqnarray}\label{qbclb}
S_n = E_n +o\left(E_n^{(1+\alpha)/2} (\log E_n)^{3/2}
 (\log\log E_n)^{(1+\varepsilon)/2}
\right) \quad\mu-\mbox{a.s.}
\end{eqnarray}
for all $ \varepsilon>0 $.

\end{theorem}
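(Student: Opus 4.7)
The plan is to apply Theorem~\ref{QBCL2} directly, with $(\Omega,\mathcal{F},P)=((0,1],\mathcal{F},\mu)$ and $A_n=T_\alpha^{-n}B_n$. By $T_\alpha$-invariance of $\mu$, we have $P(A_n)=\mu(B_n)$, so $E_n=\sum_{k=1}^{n}\mu(B_k)\to\infty$ by hypothesis; moreover, for $i>j$ the same invariance yields
\[
 P(A_i\cap A_j)=\mu\bigl(B_j\cap T_\alpha^{-(i-j)}B_i\bigr)=\int \mathbf{1}_{B_j}\,(\mathbf{1}_{B_i}\circ T_\alpha^{\,i-j})\,d\mu.
\]

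The core ingredient is the well-established polynomial decay of correlations for intermittent maps: there is a constant $C_0$ such that for every bounded-variation $\varphi$ and every $\eta\in L^1(\mu)$,
\[
 \Bigl|\int\varphi\,(\eta\circ T_\alpha^n)\,d\mu-\int\varphi\,d\mu\int\eta\,d\mu\Bigr|\leq C_0\,n^{1-1/\alpha}\,\|\varphi\|_{BV}\,\|\eta\|_{L^1(\mu)},
\]
see the works of Liverani--Saussol--Vaienti and Young. Since each $B_j\subset(d,1]$ is an interval, the $BV$-norm of $\mathbf{1}_{B_j}$ is uniformly bounded; applying the estimate with $\varphi=\mathbf{1}_{B_j}$ and $\eta=\mathbf{1}_{B_i}$ gives the basic decorrelation bound
\[
 P(A_i\cap A_j)\leq P(A_i)P(A_j)+C_1\,(i-j)^{1-1/\alpha}\,P(A_i).
\]
This fits condition (\ref{pr}) upon choosing $c(x)=C_1 x^{1-1/\alpha}$ (positive, continuous, non-increasing on $[1,\infty)$ and tending to $0$ since $\alpha<1$) together with a suitable sequence $\{b_n\}$ with $\sum b_n<\infty$.

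Once (\ref{pr}) is in hand, Theorem~\ref{QBCL2} applies directly: $f(x)=x/c(x)=C_1^{-1}x^{1/\alpha}$ is strictly increasing with $f^{-1}(x)=(C_1 x)^\alpha$, hence
\[
 g(x)=x\,f^{-1}(x)=C_1^{\alpha}\,x^{1+\alpha}.
\]
Inserting this $g$ into the conclusion (\ref{qbcl}) and selecting the auxiliary function $\psi(x)=(\log x)^{1+\varepsilon}$ exactly as in Corollary~1 reproduces the error term $E_n^{(1+\alpha)/2}(\log E_n)^{3/2}(\log\log E_n)^{(1+\varepsilon)/2}$ of (\ref{qbclb}).

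The main obstacle is the passage from the bare decorrelation bound to (\ref{pr}) with $c_n\to 0$ and summable $b_n$ in the regime $\alpha\geq 1/2$: taking $c_n=0$ forces $b_n=C_1 n^{1-1/\alpha}$, which is no longer summable, so one must genuinely exploit the multiplicative slack $c_n P(A_i)P(A_j)$ to re-absorb the non-summable tail. Extracting such a multiplicative refinement is delivered by the cone-contraction analysis in Hilbert's projective metric (Liverani--Saussol--Vaienti) applied to the transfer operator of $T_\alpha$ on BV densities; here one uses crucially that $B_n\subset(d,1]$, so that the invariant density $h$ is bounded on the support of the relevant observables and the singularity of $h$ at the neutral fixed point $0$ does not interfere.
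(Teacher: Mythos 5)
Your overall plan is the same as the paper's: verify condition (\ref{pr}) for $A_n=T_\alpha^{-n}B_n$, feed $c(x)\asymp x^{1-1/\alpha}$ into Theorem \ref{QBCL2} to get $g(x)\asymp x^{1+\alpha}$, and read off (\ref{qbclb}) as in Corollary 1. For $\alpha<1/2$ this works (granted a properly sourced BV-versus-$L^1$ decay bound), and the paper's treatment of that regime is equally short. The genuine gap is exactly the point you flag and then wave away: for $\alpha\in[1/2,1)$ you need a \emph{second-order} correlation estimate of the form
\begin{equation*}
\mu\bigl(B_j\cap T_\alpha^{-(i-j)}B_i\bigr)\;\leqslant\;(1+\mathbf{c}_{i-j})\,\mu(B_i)\,\mu(B_j)\;+\;b_{i-j}\,\mu(B_i),
\end{equation*}
with $\mathbf{c}_n=O\bigl(n^{1-1/\alpha}\bigr)$ \emph{independent of the intervals} and with $\sum_n b_n<\infty$ (additive error of order $n^{-1/\alpha}$, not $n^{1-1/\alpha}$). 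You assert that such a multiplicative refinement "is delivered by the cone-contraction analysis in Hilbert's projective metric", but you neither state nor prove it, and cone contraction is not the mechanism that yields it: higher-order expansions of correlations for LSV maps come from operator renewal theory (Sarig, Gou\"ezel), and the precise estimate needed here is exactly bound (1.3) of Gou\"ezel [12], which is what the paper invokes. Since this estimate is the entire content separating the theorem from Kim's earlier result (valid only for $\alpha<(3-\sqrt5)/2$), leaving it as an unproved assertion is a real gap, not a routine citation.

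There is a second missing step even if you grant the refined estimate: Gou\"ezel's bound is available for intervals contained in $(1/2,1]$ (the base of the induced/renewal structure), not for arbitrary intervals in $(d,1]$, and your remark that boundedness of the invariant density away from the neutral fixed point makes it valid on all of $(d,1]$ is unsupported. The paper does real work here: it first proves (\ref{qbclb}) for $B_n\subset(1/2,1]$, then shows in Lemma \ref{l1} that the relation is stable under replacing $B_n$ by $T_\alpha^{-1}B_n$, and finally propagates the conclusion to $B_n\subset(a_k,1]$, where $a_k$ are the successive preimages of $1/2$ under the left branch, by writing each $B_n$ as a difference and a union of sets handled in the previous steps and comparing the corresponding $E_n$'s. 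Since $a_k\to0$, this covers every $d>0$. Your proposal contains no analogue of this reduction, so the passage from $(1/2,1]$ to general $(d,1]$ remains unproven.
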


Theorem \ref{IM} implies the next result.

\begin{corollary}\label{SBC}
Under the conditions of Theorem \ref{IM}, sequence of intervals $\{B_n\}$
is a strong Borel--Cantelli (SBC) sequence with respect to $T_\alpha$ and $\mu$, i.e.
$$ \frac{S_n}{E_n} \to 1\quad  \mu-\mbox{a.s.}
$$
\end{corollary}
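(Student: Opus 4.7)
The plan is to derive the corollary as an immediate consequence of Theorem \ref{IM}, treating it essentially as an asymptotic comparison. The strategy is: start from the conclusion \eqref{qbclb} of Theorem \ref{IM}, divide both sides by $E_n$, and verify that the resulting error term tends to zero.

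First I would observe that the hypotheses of Theorem \ref{IM} are precisely the hypotheses of Corollary \ref{SBC}, so \eqref{qbclb} is available. I would also record that $E_n\to\infty$: since $A_n=T_\alpha^{-n}(B_n)$ and $\mu$ is $T_\alpha$-invariant, $\mu(A_n)=\mu(B_n)$, so $E_n=\sum_{k=1}^n\mu(B_k)\to\infty$ by hypothesis. Thus applying \eqref{qbclb} gives, $\mu$-a.s.,
\begin{equation*}
\frac{S_n}{E_n}=1+o\!\left(E_n^{(\alpha-1)/2}(\log E_n)^{3/2}(\log\log E_n)^{(1+\varepsilon)/2}\right).
\end{equation*}

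Next I would point out that since $\alpha\in(0,1)$, the exponent $(\alpha-1)/2$ is strictly negative, so $E_n^{(\alpha-1)/2}$ decays as a (negative) power of $E_n$. The logarithmic and iterated-logarithmic factors grow only poly-logarithmically in $E_n$, hence are dominated by any positive power of $E_n$. Consequently the little-$o$ term tends to $0$ as $n\to\infty$, which yields $S_n/E_n\to 1$ $\mu$-a.s., as required.

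There is essentially no obstacle here; the content of Corollary \ref{SBC} has been absorbed into Theorem \ref{IM}, and what remains is the elementary estimate $\alpha<1\Rightarrow E_n^{(1+\alpha)/2}(\log E_n)^{3/2}(\log\log E_n)^{(1+\varepsilon)/2}=o(E_n)$. One minor point worth stating explicitly in the write-up is the $T_\alpha$-invariance used to guarantee $E_n\to\infty$, so that the asymptotic formula \eqref{qbclb} is indeed applicable and the denominator $E_n$ is non-zero for all large $n$.
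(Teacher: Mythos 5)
Your proposal is correct and matches the paper's (implicit) argument: the corollary is stated as an immediate consequence of Theorem \ref{IM}, obtained by dividing \eqref{qbclb} by $E_n$ and noting that $(1+\alpha)/2<1$ makes the error term $o(E_n)$, with $E_n\to\infty$ guaranteed by $\sum_n\mu B_n=\infty$ and the $T_\alpha$-invariance of $\mu$.
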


Kim [11] proved that $\{B_n\}$ is a SBC sequence of intervals provided either
$B_{n+1}\subset B_n$ for all $n$ and $0 \notin \cup_n \bar{B_n}$, 
or $\alpha<(3-\sqrt{5})/2$ and $B_n$ lie in $(d,1]$. Moreover, 
Kim [11] also showed that $\{B_n\}$, $B_n=[0,n^{1/(\alpha-1)})$, 
is not the SBC sequence despite $\sum\limits_n \mu B_n$ diverges.
This implies that the result of Corollary \ref{SBC} 
can fail for intervals with left endpoint at zero.  

Gou\"{e}zel [12] has proved that for almost every $x$, $T_{\alpha}^{n} x$ belongs to $B_n$ 
infinitely often provided the intervals $B_n$ satisfy to
$ \sum\limits_{n=1}^\infty \lambda B_n =\infty $.

For $\{B_n\}$ containing in $(d,1]$, Theorem \ref{IM} improves the mentioned results
from Kim [11] and Gou\"{e}zel [12]. Note that measures $\mu$ and $\lambda$ are equivalent
for $\{B_n\}$ separated from zero. 

\section*{2. Proofs}

\begin{proof}[Proof of Theorem \ref{QBCL}]
For $m>n$, denote
\begin{equation}\label{nt}
S(n,m) = S_m-S_n, \quad E(n,m) = E_m-E_n,\quad \tilde{S}(n,m)= S(n,m)-E(n,m).
\end{equation}

Put $N_u = \max\{ n: E_{n} <u\}$ for every integer $u \geqslant 0$. 
Let $r$ and $s$ be integer numbers with $ r\geqslant 1 $ and $0\leqslant s \leqslant r$. Then 
for every fixed $s$, we have
$\bigcup\limits_{t=0}^{2^{r-s}-1} (t 2^s, (t+1) 2^s] = (0,2^r]$
and
\begin{eqnarray}\label{10}
\sum\limits_{t=0}^{2^{r-s}-1} E\left(N_{t 2^s},N_{(t+1) 2^s}\right)
\leqslant E_{N_{2^r}} < 2^r.
\end{eqnarray}
Put
\begin{eqnarray*}
T_r = \sum\limits_{s=0}^r \sum\limits_{t=0}^{2^{r-s}-1} \left(
\tilde{S}\left(N_{t 2^s},N_{(t+1) 2^s}\right)\right)^2.
\end{eqnarray*}
Taking into account inequalities (\ref{var}) and (\ref{10}) and the monotonicity
of $ g(x)/x $, we get
\begin{eqnarray*}
&&
E T_r =\sum\limits_{s=0}^r \sum\limits_{t=0}^{2^{r-s}-1} 
Var S\left(N_{t 2^s},N_{(t+1) 2^s}\right) 
\leqslant \sum\limits_{s=0}^r \sum\limits_{t=0}^{2^{r-s}-1} 
g\left(E\left(N_{t 2^s},N_{(t+1) 2^s}\right) \right)
\\ &&
\leqslant \sum\limits_{s=0}^r \sum\limits_{t=0}^{2^{r-s}-1} 
\frac{g\left(E\left(N_{t 2^s},N_{(t+1) 2^s}\right)\right) }{E\left(N_{t 2^s},N_{(t+1) 2^s}\right)}E\left(N_{t 2^s},N_{(t+1) 2^s}\right)
\\ &&
\leqslant \sum\limits_{s=0}^r \sum\limits_{t=0}^{2^{r-s}-1} 
\frac{g\left(2^r \right) }{2^r} E\left(N_{t 2^s},N_{(t+1) 2^s}\right)
\leqslant (r+1) g\left(2^r \right). 
\end{eqnarray*}

Note that $\sum_{n=1}^\infty 1/(n \psi (c n))<\infty$ for every fixed $c>0$.
Take $c=(\log 2)/4$.

Hence,
\begin{eqnarray*}
\sum\limits_{r=1}^\infty \frac{ET_r}{r^2 g(2^r) \psi(c r)} \leqslant 2
\sum\limits_{r=1}^\infty \frac{1}{r \psi(c r)}
< \infty.
\end{eqnarray*}
This implies that
\begin{eqnarray*}
\sum\limits_{r=1}^\infty \frac{T_r}{r^2 g(2^r) \psi(c r)} < \infty\quad\mbox{and}\quad
\frac{T_r}{r^2 g(2^r) \psi(c r)}\to 0 \quad\mbox{as}\quad r\to\infty
\quad\mbox{a.s.}
\end{eqnarray*}

Take integer $k$ such that $2^{r-1}<k\leqslant 2^r$. By the Cauchy--Bunyakovskii inequality,
we get from the last relation that
\begin{eqnarray*}
&&
(S_{N_k}-E_{N_k})^2 = \left( \sum\limits_{j=0}^{r-1} 
\tilde{S}(N_{2^{j-1}}, N_{2^j})+ \tilde{S}(N_{2^{r-1}},N_{k}) \right)^2
\\ && 
\leqslant (r+1) \left( \sum\limits_{j=0}^{r-1}  \left(
\tilde{S}(N_{2^{j-1}}, N_{2^j}) \right)^2+\left( \tilde{S}(N_{2^{r-1}},N_{k}) \right)^2\right)
\\ &&
\leqslant (r+1) T_r = o\left(r^3 g(2^r) \psi(c r)\right) \quad\mbox{as}\quad r\to\infty
\quad\mbox{a.s.}
\end{eqnarray*}
We have $2^{r-1} \leqslant k-1 \leqslant E_{{N_k}+1}-1 
=E_{N_{k}}+P(A_{N_{k}+1})-1 \leqslant E_{N_k}$.
Hence, $g(2^r) \leqslant g(2  E_{N_k}) \leqslant 2^{2-\delta}g(E_{N_k})$
and $\psi(c r)\leqslant \psi(c (\log E_{N_k}/\log 2+1)) \leqslant \psi((\log E_{N_k})/2)$
for all sufficiently large $r$. 
This yields relation (\ref{qbcl}) for $n=N_k$.

For $n$ with $N_{k} \leqslant n < N_{k+1}$, we have
$S_{N_{k}} \leqslant S_n < S_{N_{k+1}}$, 
$E_{N_{k}} \leqslant E_n < E_{N_{k+1}}$
and $ E_{N_{k+1}}<k+1 \leqslant E_{N_k}+2$. 
It follows that $g(E_{N_{k+1}}) \leqslant g(E_n+2) \leqslant g(E_n)((E_n+2)/E_n))^{2-\delta}$
and $ \psi((\log E_{N_{k+1}})/2)\leqslant  \psi((\log (E_{n}+2))/2) \leqslant  \psi(\log E_{n})$
for all sufficiently large $k$. Then for every $ \varepsilon>0$, the inequalities
\begin{eqnarray*}
&& \hspace*{-\parindent}
S_n-E_n \!\leqslant\! S_{N_{k+1}} - E_{N_{k}} \!\leqslant\!  S_{N_{k+1}} - E_{N_{k+1}} + 2
\!\leqslant\! \varepsilon \sqrt{g(E_{N_{k+1}}) \psi((\log E_{N_{k+1}})/2)} (\log E_{N_{k+1}})^{3/2}
\\ && \hspace*{-\parindent}
\leqslant (1+\varepsilon) \varepsilon \sqrt{g(E_{n}) \psi(\log E_{n})} (\log E_{n})^{3/2}
\quad\mbox{a.s.}
\end{eqnarray*}
hold for all sufficiently large $n$. 
For every $ \varepsilon>0$, we also have
\begin{eqnarray*}
&&
S_n-E_n \geqslant S_{N_{k}} - E_{N_{k+1}} \geqslant  S_{N_{k}} - E_{N_{k}} - 2
\geqslant -\varepsilon \sqrt{g(E_{N_{k}}) \psi(\log E_{N_{k}})} (\log E_{N_{k}})^{3/2}
\\ &&
\geqslant -\varepsilon \sqrt{g(E_{n}) \psi(\log E_{n})} (\log E_{n})^{3/2}
\quad\mbox{a.s.}
\end{eqnarray*}
for all sufficiently large $n$. Hence, relation (\ref{QBCL}) follows.
\end{proof}

\begin{proof}[Proof of Theorem \ref{QBCL2}]
Check that conditions (\ref{pr}) imply inequalities (\ref{var}).
We use notations (\ref{nt}).

Let $k_{n,m}$ be the solution of the equation $ \frac{x}{c(x)} =  E(n,m)$.
By (\ref{pr}), we have
\begin{eqnarray*}
&&
Var (S_m-S_n) = E(n,m)- \sum\limits_{i=n+1}^m P^2(A_i) +
2 \sum\limits_{n<i<j\leqslant m} \left(P(A_i A_j) - P(A_i) P(A_j)\right)
\\ &&
\leqslant E(n,m)+ 2 \sum\limits_{n<i<j\leqslant m} c_{j-i} P(A_i) P(A_j)+
2 \sum\limits_{n<i<j\leqslant m} b_{j-i} P(A_i).
\end{eqnarray*}
For the last term, we have the following bound
\begin{eqnarray*}
 &&
 \sum\limits_{n<i<j\leqslant m} b_{j-i} P(A_i)
\leqslant \sum\limits_{i=n+1}^m  P(A_i) \sum\limits_{j=i+1}^m  b_{j-i}  
\leqslant E(n,m) \sum\limits_{j=1}^\infty  b_{j}.
\end{eqnarray*}
For the middle term, we get
\begin{eqnarray*}
 &&
\sum\limits_{n<i<j\leqslant m} c_{j-i} P(A_i) P(A_j) 
\leqslant
\left( \sum\limits_{n<i<j\leqslant m, j-i\leqslant k_{n,m}} 
 +  \sum\limits_{n<i<j\leqslant m, j-i\geqslant k_{n,m}} \right) c_{j-i} P(A_i) P(A_j)
 \\ &&
\leqslant  \sum\limits_{i=n+1}^m \sum\limits_{j=i+1}^{[k_{n,m}]} c_{j-i} P(A_i) P(A_j)
 +  c(k_{n,m}) \sum\limits_{n<i<j\leqslant m, j-i\geqslant k_{n,m}} P(A_i) P(A_j)
\\ && 
\leqslant k_{n,m} \sup\limits_n \{c_{n}\}
\sum\limits_{i=n+1}^m  P(A_i)  
 +  c(k_{n,m}) \left(E(n,m)\right)^2 
\\ && 
 \leqslant C \left(E(n,m) k_{n,m}  +  c(k_{n,m}) \left(E(n,m)\right)^2 \right)
=2 C k_{n,m} E(n,m) = 2 C g\left(E(n,m)\right).
\end{eqnarray*}
The above bounds imply relation (\ref{var}) and Theorem \ref{QBCL2} follows.
\end{proof}

\begin{proof}[Proof of Theorem \ref{IM}.]
Assume first that $B_n\subset (1/2,1]$ for all $n$.
From Gou\"{e}zel [12], we borrow bound (1.3) as follows
\begin{eqnarray*}
\left| \mu(T_\alpha^{-i} B_i \cap T_\alpha^{-j} B_j) -
(1+\mathbf{c}_{i-j}) \mu(B_i) \mu(B_j)\right| \leqslant \frac{C \mu(B_j)}{(j-i)^\beta},
\end{eqnarray*}
where $\beta=1/\alpha$ and $\mathbf{c}_n = c n^{1-\beta}(1+o(1))$ as $n\to\infty$
for some non-zero constant $c$.
Then $C_1 = \sup\limits_n |\mathbf{c}_n| n^{\beta-1}<\infty$. 
Take $c(x)= C_1 x^{1-\beta}$ and $b_n= C n^{-\beta}$. 
If $\alpha\geqslant 1/2 $, then the conditions of Theorem \ref{QBCL2} hold for $g(x)= C_2 x^{1+\alpha}$. 
By Corollary 1, the result follows.
For $ \alpha<1/2 $, we get $\sum\limits_n c_n<\infty$ and
Theorem \ref{QBCL2} with $g(x)=C_3 x$
implies the result. 

We further use the following agreements. We write that relation
(\ref{qbclb}) holds for a sequence of measurable sets $\{B_n\}$ if it 
holds with $S_n$ and $E_n$ defined by (\ref{snen})
for $A_n=\{T_{\alpha}^{n} x \in B_n\}$. We
define $S_n^k$ and $E_n^k$ replacing $\{B_n\}$ by $\{B_n^k\}$ for every
fixed natural k.

Note that $B_n$ can be arbitrary measurable sets in the next result.

\begin{lemma}\label{l1}
If relation (\ref{qbclb}) holds for a sequence of sets $\{B_n\}$, then
(\ref{qbclb}) holds for  $\{T_\alpha^{-1} B_n\}$.
\end{lemma}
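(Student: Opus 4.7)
The plan is to reduce the statement for $\{T_\alpha^{-1} B_n\}$ to the hypothesized statement for $\{B_n\}$ by exploiting the fact that pulling back the target sets by one iterate of $T_\alpha$ is equivalent to shifting the starting point by one iterate, and then invoking the $T_\alpha$-invariance of $\mu$.

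First I would rewrite the new events. With $A_n' = \{T_\alpha^n x \in T_\alpha^{-1} B_n\} = \{T_\alpha^{n+1} x \in B_n\}$, the indicator satisfies $\mathbb{I}(A_n')(x) = \mathbb{I}(A_n)(T_\alpha x)$, so if $S_n'$ is the counting sum for the pulled-back sequence and $S_n$ is the one for $\{B_n\}$, then
\begin{equation*}
S_n'(x) = S_n(T_\alpha x).
\end{equation*}
Second, since $\mu$ is $T_\alpha$-invariant, $\mu(T_\alpha^{-1} B_n) = \mu(B_n)$ for every $n$, hence the expectations agree, $E_n' = E_n$. In particular, the normalizing factor $E_n^{(1+\alpha)/2} (\log E_n)^{3/2} (\log\log E_n)^{(1+\varepsilon)/2}$ is the same for both sequences.

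Finally, let $G$ denote the set of $x\in(0,1]$ for which relation (\ref{qbclb}) holds for the sequence $\{B_n\}$; by hypothesis $\mu(G)=1$. Invariance of $\mu$ gives $\mu(T_\alpha^{-1} G) = 1$, and for every $x\in T_\alpha^{-1} G$ the point $T_\alpha x$ lies in $G$, so
\begin{equation*}
S_n'(x) = S_n(T_\alpha x) = E_n + o\!\left(E_n^{(1+\alpha)/2} (\log E_n)^{3/2} (\log\log E_n)^{(1+\varepsilon)/2}\right)
= E_n' + o(\cdot),
\end{equation*}
which is exactly (\ref{qbclb}) for the sequence $\{T_\alpha^{-1} B_n\}$.

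There is no real obstacle in this lemma: the only substantive content is the identification $S_n'(x) = S_n(T_\alpha x)$, after which measure-preservation does all the work of transporting the almost-sure statement. The care required is purely bookkeeping — making sure the shift in the iterate index is correctly accounted for and that the normalizing sequence is unchanged, which is immediate from $E_n' = E_n$.
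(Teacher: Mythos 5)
Your proof is correct, and it takes a genuinely different (and cleaner) route than the paper's. You keep the time index fixed and shift the point: from $\mathbb{I}(A_n')(x)=\mathbb{I}(A_n)(T_\alpha x)$ you get $S_n'(x)=S_n(T_\alpha x)$, invariance of $\mu$ gives $E_n'=E_n$, and the almost-sure relation (\ref{qbclb}) transfers through the full-measure set $T_\alpha^{-1}G$ with no error terms at all. The paper instead keeps the point fixed and shifts time: it compares $S_n^1$ with $S_{n+1}$ at the same $x$, claiming $S_n^1=S_{n+1}-\mathbb{I}(x\in T_\alpha^{-1}B_n)$, so that the counting functions and the expectations differ by at most $1$, and then it must absorb the replacement of $E_{n+1}$ by $E_n^1$ inside the normalizer $f$ using the regularity of $f$. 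Your argument avoids that bookkeeping entirely, and it also sidesteps the index care the paper's comparison requires: the $k$-th pulled-back event is $\{x\in T_\alpha^{-(k+1)}B_k\}$, so matching it against the terms of $S_{n+1}$ (whose $(k+1)$-st term involves $B_{k+1}$, not $B_k$) is delicate for a non-constant sequence of sets, whereas your identity $S_n'(x)=S_n(T_\alpha x)$ is exact. What the paper's pointwise comparison would buy is a relation between the two counting functions at the same point, independent of the measure; your route uses the a.s. nature of the hypothesis together with $T_\alpha$-invariance of $\mu$ (both for $E_n'=E_n$ and for $\mu(T_\alpha^{-1}G)=1$), which is exactly the setting of the lemma, so nothing is lost.
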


\begin{proof}[Proof of Lemma \ref{l1}.]
Put $B_n^1= T_\alpha^{-1} B_n$ and $A_n^1=\{x: T_\alpha^n x \in B_n^1\}$ for all n. 
Then we get
\begin{eqnarray*}
&&
S_n^1 = \sum\limits_{k=1}^n \mathbb{I} (A_k^1) =
 \sum\limits_{k=1}^n \mathbb{I} (\{x: x \in T_\alpha^{-k}(B_n^1)  \})
 = \sum\limits_{k=1}^n \mathbb{I} (\{x: x \in T_\alpha^{-(k+1)}(B_n)  \})
\\ && 
= \sum\limits_{k=1}^{n+1} \mathbb{I} (\{x: x \in T_\alpha^{-k}(B_n)  \})
 -  \mathbb{I} (\{x: x \in T_\alpha^{-1}(B_n)  \})
 = S_{n+1} - \mathbb{I} (\{x: x \in T_\alpha^{-1}(B_n)  \}).
\end{eqnarray*}
This follows that $S_{n+1}-1 \leqslant S_n^1 \leqslant S_{n+1}$ and
$E_{n+1}-1 \leqslant E_n^1=E S_n^1 \leqslant E_{n+1}$. Hence,
\begin{eqnarray*}
S_n^1 = E_{n+1} + o\left(f(E_{n+1})\right)  = E_n^1+ o\left(f(E_{n}^1)\right) \quad \mu-\mbox{a.s.},
\end{eqnarray*}
where
$$f(x)= x^{(1+\alpha)/2} (\log x)^{3/2}  (\log\log x)^{(1+\varepsilon)/2}$$ for $x>e^e$.
 The result follows.
\end{proof}

Put $a_0=1/2$ and $a_k = T^{-1}\vert_{(0,1/2]} (a_{k-1})$ for $k\geqslant 1$. 

Assume now that $B_n\subset (a_1,a_0]$ for all $n$. Then the intervals
$B^2_n=T_\alpha (B_n)$ lie in $(1/2,1]$ and we get
\begin{eqnarray*}
&&
S_n^2= E_n^2+ o\left(f(E_n^2) \right) \quad \mu-\mbox{a.s.}
\end{eqnarray*}
We have $B_n = B_n^3 \backslash B_n^4$, where 
$B_n^3 = T_\alpha^{-1}(B^2_n)$ and $B_n^4= (T_\alpha^{-1}(B^2_n)\cap (1/2,1])$.
By Lemma~\ref{l1} for $B_n=B_n^2$, we have
\begin{eqnarray*}
S_n^3= E_n^3+ o\left(f(E_n^3)\right) \quad \mu-\mbox{a.s.}
\end{eqnarray*}
For the intervals $B_n^4$, we get
\begin{eqnarray*}
S_n^4= E_n^4+ o\left(f(E_n^4)\right) \quad \mu-\mbox{a.s.}
\end{eqnarray*}
Taking into account that $S_n=S_n^3-S_n^4$ and $E_n=E_n^3-E_n^4$, we arrive at
\begin{eqnarray*}
S_n = E_n + o\left(f(E_n^3))\right)+ o\left(f(E_n^4)\right)\quad \mu-\mbox{a.s.}
\end{eqnarray*}
The two last terms are of the same order as 
$o\left(f(E_n)
\right)$ since 
$\mu B_n^4 \leqslant c \mu B_n^3$
for some $c\in(0,1)$ which gives $E_n \geqslant (1-c) E_n^3 \geqslant (1-c) E_n^4$.

Suppose now that $B_n\subset (a_1,1]$ for all $n$. Then $B_n=B_n^5 \cup B_6$, where
$B_n^5\subset (a_1,a_0]$ and $B_n\subset (a_0,1]$. Applying of relation (\ref{qbclb})
to $\{B_n^5\}$ and $\{B_n^6\}$ yields the result in the case under consideration.

For $B_n\subset (a_k,1]$ with $k\geqslant 2$, we obtain the result by induction.
\end{proof}

\bigskip
\noindent
{\bf References}
{\parindent0mm
{\begin{itemize}
\footnotesize
\item[{[1]}]
Chung K.L., Erd\H{o}s P., 1952. On the application of 
the Borel-Cantelli lemma.Trans. Amer. Math. Soc. 72, 179--186.
\item[{[2]}]
Erd\H{o}s P., R\'{e}nyi A., 1959. On Cantor's series with 
convergent $\sum 1/q$, Ann. Univ. Sci. Budapest 
Sect. Math. 2, 93--109.
\item[{[3]}]
Spitzer F., 1964. Principles of random walk. Van Nostrand, Princeton.
\item[{[4]}]
M\'{o}ri T.F., Sz\'{e}kely  G.J., 1983.
On the Erd\H{o}s--R\'{e}nyi generalization of the
Borel--Cantelli lemma. Studia Sci. Math. Hungar. 18, 173-182.
\item[{[5]}]
Petrov V.V., 2002. A note on the Borel--Cantelli lemma,
Statist.~Probab.~Lett. 58, 283--2866.
\item[{[6]}]
Frolov A.N., 2012. Bounds for probabilities of unions of events 
and the Borel--Cantelli lemma. Statist. Probab. Lett. 82, 2189--2197.
\item[{[7]}]
Frolov A.N., 2015. On lower and upper bounds for probabilities of unions and the Borel--Cantelli lemma.
Studia Sci. Math. Hungarica. 52 (1), 102--128.
\item[{[8]}]
Petrov, V.V. The Growth of Sums of Indicators of Events. J Math Sci 128, 2578--2580 (2005). Translated from Zapiski Nauchnykh Seminarov POMI, Vol. 298, 2003, pp. 150--154.
\item[{[9]}]
Phillipp, W. Some metrical theorems in number theory. Pacific J. Math. 20 (1967), 109--127.
\item[{[10]}]
Schmidt, W. Metrical theorems on fractional parts of sequences. Trans. Amer. Math. Soc. 110 (1964),
493--518.
\item[{[11]}]
Kim, D. The dynamical Borel--Cantelli lemma for interval maps. Discrete Contin. Dyn. Syst. 17(4) (2007), 891--900.
\item[{[12]}]
Gou\"{e}zel S. A Borel--Cantelli lemma for intermittent interval maps. Nonlinearity 20(6) (2007), 1491--1497.
\item[{[13]}]
Gupta C., Nicol M. and Ott W. A Borel--Cantelli lemma for 
non-uniformly expanding dynamical systems.
Nonlinearity 23(8) (2010), 1991--2008.
\item[{[14]}]
Haydn N., Nicol M., Persson T., Vaienti S. A note on Borel-Cantelli lemmas for nonuniformly
hyperbolic dynamical systems. Ergodic Theory Dynam. Systems 33 (2013), 2, 475--498.
\item[{[15]}]
Luzia N., Borel-Cantelli lemma and its applications. Trans. Amer. Math. Soc. 366 (2014), no. 1, 547--560.
\end{itemize}
}
}

\end{document}